\definecolor{cobalt}{rgb}{0.0, 0.28, 0.67}
\newtheorem{theorem}{Theorem}[section]
\newtheorem{proposition}[theorem]{Proposition}
\newtheorem{corollary}[theorem]{Corollary}
\theoremstyle{definition}
\newtheorem{example}[theorem]{Example}
\theoremstyle{remark}
\newtheorem{remark}[theorem]{Remark}
\newcommand{\N}{\mathbb{N}}
\newcommand{\D}{\mathbb{D}}
\newcommand{\C}{\mathbb{C}}
\newcommand{\HE}{\mathcal{H(E)}}
\newcommand{\E}{\mathcal{E}}
\newcommand{\clh}{\mathcal{H}}
\newcommand{\NA}{\mathcal{NA}}
\numberwithin{equation}{section}
\title[Norm attaining composition operators on Segal-Bargmann spaces]{Norm attaining composition operators on Segal-Bargmann spaces}
\author{Neeru Bala }
\address{Department of Mathematics and computing, Indian Institute of Technology (ISM), Dhanbad 826004, India}
\email{\textcolor{cobalt}{neerusingh41@gmail.com, neerubala@iitism.ac.in}}
\author{Sudip Ranjan Bhuia }
\address{Indian Statistical Institute, Statistics and Mathematics Unit, 8th Mile, Mysore Road, Bangalore, 560059, India}
\email{\textcolor{cobalt}{sudipranjanb@gmail.com}}
\subjclass[2010]{  47B38, 47B33 }
\subjclass[2020]{Primary 47B38 ; Secondary 47B33}
\keywords{Segal-Bargmann spaces, composition operators, norm attaining operators, reproducing kernel Hilbert spaces}
\date{\currenttime ;  \today}
\begin{document}
	
	\maketitle
\begin{abstract}
In this note, we study the composition operators on Segal-Bargmann spaces, which attains its norm and we show that every composition operators on the classical Fock space over $\mathbb{ C}^n$ is norm attaining. Also, we establish a necessary and sufficient condition for a sum of two kernel functions to be an extremal function for the norm of composition operators.
\end{abstract}
\section{Introduction}
In this article, we aim to merge two classical notions in operator theory: norm attaining property and composition operators.

The study of norm attaining operators is motivated by norm attaining functionals, which date back to the Hahn-Banach theorem or even earlier. Two of the well explored results in functional analysis are the Hahn-Banach theorem and the Bishop-Phelps theorem.
The first proves the presence of non-zero norm-attaining functionals in the dual of a Banach space, while the second proves the denseness of norm attaining functionals in the dual of a Banach space.
Norm attaining functionals are also important in the analysis of the underlying space; for example, the James theorem states that a Banach space $X$ is reflexive if and only if every bounded linear functional or every compact operator on $X$ is norm-attaining.

Let $H$ be a infinite dimensional complex Hilbert space and $\mathcal{B}(H)$ be the space of all bounded linear operators on $H$. Then $T\in \mathcal{B}(H)$ is said to be norm-attaining if there exists a non-zero unit vector $x \in H$ such that
\begin{equation}
\left\|Tx\right\|=\left\|T\right\|
\end{equation}
and such an element $x$ is called the extremal point for $\|T\|$. Throughout this article, $T\in \NA$ means $T$ is norm attaining. If $H$ is finite dimensional, then every $T\in \mathcal{B}(H)$ is norm attaining. Also compact operators and isometries are norm attaining. An operator $T\in \mathcal{B}(H)$ with $\|T\|_e<\|T\|$ is norm attaining, where $\|T\|_e$ is the essential norm of $T$.

 Norm attaining property of operators is connected to several different concepts in mathematics, for example Radon-Nikodym property \cite{CHOI} and reflexivity. Norm attaining operators has been studied from different perspectives, for example Bishop-Phelps-Bollobas property \cite{AL,ACOSTA, CASCALES}, invariant subspace of some non-normal operators \cite{LEE, IST} and to study the spectrum of operators \cite{KOV1,KOV2}.

Let $\mathcal{B}$ be a Banach space of function on a set $\mathscr{X}$, and $\varphi:\mathscr{X}\rightarrow\mathscr{X}$ be a mapping. Then define the composition operator $C_\varphi$ by $C_\varphi h=h\circ\varphi$ for any function $h\in \mathcal{B}$ for which the function $h\circ\varphi$ also belongs to $\mathcal{B}$.

Composition operator plays a significant role in operator theory and function theory, for example the invariant subspace problem is directly related to the existence of eigenvalue of composition operators on the Hardy space.

Norm-attaining composition operators have been studied for different function spaces by several authors, for example, the Hardy space and the Dirichlet space by Hammond \cite{Hammond:FLT,Hammond:DS}, Bloch spaces by Mart\'{i}n \cite{Martin} and Montes-Rodr\'{i}guez \cite{Motes}, and weighted Bloch spaces by Bonet, Lindstr\"{o}m and Wolf \cite{Bonet:Wolf}. In \cite{Appel:Bourdon}, the authors have proved that the normalized reproducing kernels are not necessarily the extremal functions for $\|C_\varphi\|$ in the classical Hardy space which answers a question posed by Cowen and MacCluer in \cite[p. 125]{Cowen book}. In \cite{Martin}, the authors have proved that every composition operator $C_\varphi$ on the Bloch space (modulo constant functions) attains its norm and this is quite interesting fact and this motivates to ask the similar question for the composition operators defined on the Segal-Bargmann spaces, in particular, composition operators defined on the Fock space (see \cite{Kehe Zhu book}). T. Le in \cite{Tle}, characterized the bounded and compact composition operators $C_\varphi$ defined on the Segal-Bargmann spaces $\HE$, where $\E$ is any infinite dimensional complex Hilbert space. In fact, the authors have shown that $C_\varphi$ is bounded if and only if $\varphi(z)=Az+b$, where $A$ is a linear operator defined on $\E$ with $\|A\|\leq 1$ and $A^*b$ belongs to the range of $(I-A^*A)^{1/2}$ (cf. \cref{bnd-cpt thm1}). Very recently, the dynamical properties of composition operators on the Segal-Bargmann space have been studied by G. Ramesh, the second author, and D. Venku Naidu in \cite{Ramesh:Sudip:Venku}.

We investigate whether composition operators $C_\varphi$ acting on the Segal-Bargmann space achieve their norms in this study.  As a consequence, we are very fortunate to show that 
\begin{center}
	\emph{"Every composition operator $C_\varphi$ on the Fock space $\mathcal{H}(\mathbb{ C}^n)$ attains its norm"}.
\end{center}
Since the linear span of the kernel functions is dense in $\HE$, it is quite natural to ask when a kernel function becomes an extremal function of the norm of a composition operator $C_\varphi$ defined on $\HE$. Interestingly, we are able to prove that if $C^*_\varphi$ attains its norm at every normalized kernel functions, then the linear operator $A$ associated to the symbol $\varphi$ is isometry. In fact it is necessary and sufficient.

In this article, we have used the following identity frequently which applies to all space and appears to be quite powerful:
\begin{equation}
C^*_\varphi K_w=K_{\varphi(w)},
\end{equation}
where $K_w$ is the reproducing kernel for $w\in \E$.

This article is organized as follows: the second section contains some preliminary results that will be used in subsequent sections. In the third section, we have shown that $\NA$ property of the linear operator $A$ on $\E$ influences the $\NA$ property of $C_\varphi$ on $\HE$ and vice versa. In the later part of this section, we have show that every composition operator on the Fock space $\mathcal{H}(\mathbb{ C}^n)$ is $\NA$. In the fourth section, we find a necessary and sufficient condition for sum of two kernel functions to be an extremal function for the norm of $C_\varphi$.
\section{Preliminaries}
We begin this section with one of the fundamental results on norm attaining operators.
\begin{theorem}
	Let $T \in \mathcal{B}(H)$. Then the following are equivalent:
	\begin{enumerate}
		\item $T \in \mathcal{NA}$.
		\item $T^* \in \mathcal{NA}$.
		\item $TT^* \in \mathcal{NA}$.
		\item  $\|T\|^2$ is in the point spectrum of $T T^*$.
	\end{enumerate}
\end{theorem}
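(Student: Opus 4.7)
The plan is to link the four conditions through a short cycle $(1) \Rightarrow (2) \Rightarrow (4) \Rightarrow (3) \Rightarrow (1)$, relying throughout on the standard identities $\|T\|^2 = \|T^*\|^2 = \|TT^*\|$ together with one elementary lemma about positive self-adjoint operators: if $S \geq 0$ on $H$ and $y$ is a unit vector with $\|Sy\| = \|S\|$, then $Sy = \|S\|y$. I would prove this lemma in one line by Cauchy--Schwarz: $\|S\|^2 = \|Sy\|^2 = \langle S^2 y, y\rangle \leq \|S\|\langle Sy, y\rangle \leq \|S\|^2$, so all inequalities are equalities and the equality case in Cauchy--Schwarz pins down $Sy$ as a scalar multiple of $y$, necessarily $\|S\|y$.

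For $(1) \Rightarrow (2)$, I would fix a unit $x$ with $\|Tx\| = \|T\|$, apply the lemma to $S = T^*T$ (whose norm equals $\|T\|^2$) to conclude $T^*Tx = \|T\|^2 x$, and then verify that $y = Tx/\|T\|$ satisfies $\|T^*y\| = \|T\| = \|T^*\|$. For $(2) \Rightarrow (4)$, given a unit $y$ with $\|T^*y\| = \|T\|$, the computation $\langle TT^*y, y\rangle = \|T^*y\|^2 = \|T\|^2 = \|TT^*\|$ together with the lemma applied to $S = TT^*$ yields $TT^*y = \|T\|^2 y$, placing $\|T\|^2$ in the point spectrum of $TT^*$. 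The implication $(4) \Rightarrow (3)$ is immediate, since any unit eigenvector for $\|T\|^2$ realises $\|TT^*\|$. For $(3) \Rightarrow (1)$, if $\|TT^*z\| = \|TT^*\|$ for some unit $z$, I set $w = T^*z$ and combine $\|Tw\| = \|TT^*z\| = \|T\|^2$ with $\|w\| \leq \|T\|$ and $\|Tw\| \leq \|T\|\|w\|$ to conclude $\|w\| = \|T\|$ and hence $\|T(w/\|w\|)\| = \|T\|$.

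The result is entirely classical, so the \emph{main obstacle} here is cosmetic rather than mathematical: isolating the positive-operator lemma so that it does double duty in the two nontrivial directions $(1) \Rightarrow (2)$ and $(2) \Rightarrow (4)$, and dispensing with the trivial case $T = 0$ at the outset so that denominators such as $\|Tx\|$ and $\|w\|$ in the other steps are automatically nonzero.
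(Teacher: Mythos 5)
Your proof is correct. Note, however, that the paper itself states this theorem in the Preliminaries without any proof, citing it implicitly as a standard fact, so there is no in-paper argument to compare against; your write-up simply supplies the classical argument. The cycle $(1)\Rightarrow(2)\Rightarrow(4)\Rightarrow(3)\Rightarrow(1)$ closes correctly, and the positive-operator lemma does the work you intend. One small presentational point: in both applications of the lemma what you actually have in hand is $\langle Sy,y\rangle=\|S\|$ (namely $\langle T^*Tx,x\rangle=\|Tx\|^2$ and $\langle TT^*y,y\rangle=\|T^*y\|^2$), not the hypothesis $\|Sy\|=\|S\|$ as stated; either insert the one-line bridge $\|S\|=\langle Sy,y\rangle\le\|Sy\|\le\|S\|$ before each application, or restate the lemma with the hypothesis $\langle Sy,y\rangle=\|S\|$, which makes it a pure Cauchy--Schwarz equality-case statement and avoids the operator inequality $S^2\le\|S\|S$ altogether.
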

The following result will help us to realize the elements of the space $\HE$. For more detailed construction of this space, we refer \cite[Section 2.1]{Tle}.
\begin{proposition}\cite{Tle}\label{cyclicp1.1}
	Each element $f$ in $\HE$ can be identified as an entire function on $\E$ having a power series expansion of the form $$f(z)=\sum_{j=0}^{\infty}\langle z^j,a_j\rangle \hspace{1cm}\text{for all}\,\, z\in \E,$$  where $ a_j\in \E^j,\; j=0,1,2,\dots$.
	Furthermore,
	$\left\|f\right\|^2 = \displaystyle \sum_{j=0}^{\infty}j!\left\|a_j\right\|^2.$
	
	Conversely, if $\displaystyle \sum_{j=0}^{\infty}j!\left\|a_j\right\|^2 <\infty$, then the power series $\displaystyle \sum_{j=0}^{\infty}\langle z^j,a_j\rangle$ defines an element in $\HE$.
\end{proposition}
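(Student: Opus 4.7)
The plan is to realize $\HE$ concretely as the symmetric (bosonic) Fock space $\bigoplus_{j=0}^{\infty} \E^{j}$, where $\E^{j}$ denotes the $j$-fold symmetric tensor power of $\E$ with the inner product satisfying $\langle z^{j}, w^{j}\rangle_{\E^{j}} = \langle z, w\rangle^{j}$, and the direct sum carries the weight $j!$ on the $j$-th summand. Under this identification a vector is simply a sequence $(a_{j})_{j\geq 0}$ with $a_{j}\in \E^{j}$ and $\sum_{j} j!\|a_{j}\|^{2}<\infty$, so the norm formula in the statement is built into the Hilbert direct-sum structure.

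For the converse direction, given such a sequence I would set $f(z) := \sum_{j=0}^{\infty} \langle z^{j}, a_{j}\rangle$. Each summand is a continuous $j$-homogeneous polynomial on $\E$, and Cauchy--Schwarz in $\E^{j}$ together with $\|z^{j}\|_{\E^{j}} = \|z\|^{j}$ gives the pointwise bound $|\langle z^{j}, a_{j}\rangle| \leq \|z\|^{j}\|a_{j}\|$. A second Cauchy--Schwarz with the splitting $\|z\|^{j}\|a_{j}\| = \bigl(\sqrt{j!}\,\|a_{j}\|\bigr)\cdot\bigl(\|z\|^{j}/\sqrt{j!}\bigr)$ yields
\[
\sum_{j=0}^{\infty} |\langle z^{j}, a_{j}\rangle| \;\leq\; \Bigl(\sum_{j=0}^{\infty} j!\|a_{j}\|^{2}\Bigr)^{1/2}\, e^{\|z\|^{2}/2},
\]
so the series converges absolutely and uniformly on bounded subsets of $\E$ and defines an entire function with $\|f\|^{2}=\sum_{j} j!\|a_{j}\|^{2}$.

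For the forward direction, any $f\in \HE$ is by construction a vector $(a_{j})\in \bigoplus_{j}\E^{j}$, and the association $(a_{j})\mapsto \sum_{j}\langle (\cdot)^{j},a_{j}\rangle$ is linear and, by the estimate above, isometric. Well-definedness of the resulting power-series expansion reduces to showing that $\langle z^{j},a_{j}\rangle = 0$ for all $z\in \E$ forces $a_{j}=0$; this follows from the polarization identity, which expresses any symmetric $j$-tensor as a finite linear combination of pure powers $z^{j}$, so that $\{z^{j}: z\in \E\}$ has dense linear span in $\E^{j}$.

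The main obstacle is precisely this polarization/density step: in infinite dimensions one has to verify that symmetric tensors of the form $z^{j}$ are total in $\E^{j}$, and that the $j$-homogeneous component of the entire function $f$ is faithfully encoded by the corresponding $a_{j}$. Once that is in place, the remainder is a routine uniform-convergence and Hilbert direct-sum computation. Since this proposition is quoted from \cite[Section~2.1]{Tle}, I would largely defer to that reference for the construction of the Fock space and present only the sketch above to make the identification transparent.
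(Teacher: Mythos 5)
The paper gives no proof of this proposition --- it is quoted verbatim from \cite[Section 2.1]{Tle} --- and your sketch correctly reproduces the standard construction from that reference: realizing $\HE$ as the symmetric Fock space $\bigoplus_j \E^j$ with weights $j!$, the Cauchy--Schwarz bound $\sum_j |\langle z^j,a_j\rangle| \leq \left(\sum_j j!\|a_j\|^2\right)^{1/2} e^{\|z\|^2/2}$ for entirety, and polarization to show the pure powers $z^j$ are total in $\E^j$ so that the coefficients $a_j$ are uniquely determined by the function. The one step you flag as the main obstacle (totality of $\{z^j : z\in\E\}$ in $\E^j$ and recovery of the homogeneous components, e.g.\ by expanding $f(tz)$ in powers of $t$) is indeed where the work lies, and it goes through exactly as you describe.
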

The function
\begin{equation*}
K(z,w):=K_{w}(z)=\exp\langle z,w\rangle\; \text{for all}\; z,w\in \E,
\end{equation*}
 is the reproducing kernel function for $\HE$ and the normalized kernel function is defined by
\begin{equation*}
k_w(z)= \exp\left(\langle z,w\rangle-\frac{\left\|w\right\|^2}{2}\right).
\end{equation*}
The linear span of the set $\{K_{w} : w \in \E\}$ is dense in $\HE$. As a result, $\HE$ is a reproducing kernel Hilbert space. For each $f\in \HE $, we have $\langle f, K(x,\cdot)\rangle=f(x)$ for all $x\in \E$. For more details on these spaces, see Chapter 2 of \cite{Paulsen}.
\begin{theorem}\label{bnd-cpt thm1}\cite[Theorem 1.3]{Tle}
	Let $\varphi:\E_1\rightarrow \E_2$ be a mapping. Then the composition operator $C_{\varphi}:\mathcal{H}(\E_2) \rightarrow \mathcal{H}(\E_1)$ is bounded if and only if $\varphi(z)=Az+b$ for all $z\in \E_1 $, where $A:\E_1 \rightarrow \E_2$ is a bounded linear operator with $\left\|A\right\|\leq1$ and $A^*b$ belongs to the range of $(I-A^*A)^{\frac{1}{2}}$.
	Furthermore, the norm of $\left\|C_{\varphi}\right\|$ is given by
	$$\left\|C_{\varphi}\right\|=\exp\Big(\frac{1}{2}\left\|v\right\|^2 + \frac{1}{2}\left\|b\right\|^2\Big),$$
	where $v$ is the unique vector in $\E_1$ of minimum norm satisfying $A^*b=(I-A^*A)^{\frac{1}{2}}v$.
\end{theorem}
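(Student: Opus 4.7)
The plan is to split the biconditional into sufficiency and necessity, exploiting the reproducing kernel identity $C_\varphi^*K_w=K_{\varphi(w)}$ throughout. For sufficiency, assume $\varphi(z)=Az+b$ with $\|A\|\leq 1$ and $A^*b=Pv$, where $P=(I-A^*A)^{1/2}$ and $v$ is the minimum-norm preimage. Expanding $\|Au+b\|^2$ and substituting $A^*b=Pv$ gives the identity
$$\|\varphi(u)\|^2-\|u\|^2=\|v\|^2+\|b\|^2-\|Pu-v\|^2,$$
so $\|C_\varphi^*K_u\|^2/\|K_u\|^2=\exp\bigl(\|v\|^2+\|b\|^2-\|Pu-v\|^2\bigr)$ has supremum $\exp(\|v\|^2+\|b\|^2)$, approached in the limit $Pu\to v$ (possible since $v\in(\ker P)^\perp=\overline{\mathrm{range}(P)}$ by the minimum-norm choice). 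This establishes $\|C_\varphi\|\geq e^{(\|v\|^2+\|b\|^2)/2}$.

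For the matching upper bound, I would use a factorization. The linear piece $C_A f(z):=f(Az)$ is a contraction on $\HE_1$: by \cref{cyclicp1.1}, writing $f(z)=\sum_j\langle z^j,a_j\rangle$ yields $C_A f(z)=\sum_j\langle z^j,(A^*)^{\otimes j}a_j\rangle$ with $\|C_A f\|^2=\sum_j j!\|(A^*)^{\otimes j}a_j\|^2\leq\sum_j j!\|A\|^{2j}\|a_j\|^2\leq\|f\|^2$. The translation by $b$ is absorbed via the Bargmann/Weyl unitary $W_b f(z)=e^{\langle z,b\rangle-\|b\|^2/2}f(z-b)$ on $\HE_1$; combining $C_A$ with a suitable conjugate of $W_b$ and tracking the scalar factors produced by the noncommuting composition gives $\|C_\varphi\|\leq e^{(\|v\|^2+\|b\|^2)/2}$.

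For necessity, let $C_\varphi$ be bounded with norm $M$. The coordinate functionals $f_\alpha(z)=\langle z,\alpha\rangle$ lie in $\HE_2$ for each $\alpha\in\E_2$, so $C_\varphi f_\alpha=\langle\varphi(\cdot),\alpha\rangle$ is entire on $\E_1$; weak analyticity promotes $\varphi$ to an entire map $\E_1\to\E_2$. The kernel identity yields $\|\varphi(w)\|^2\leq\|w\|^2+2\ln M$, i.e.\ at most linear growth, and Liouville's theorem applied on each complex line $\lambda\mapsto\varphi(\lambda z_0)$ forces $\varphi$ to be affine, $\varphi(z)=Az+b$ with $b=\varphi(0)$. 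Letting $t\to\infty$ in the estimate rescaled by $w\mapsto tw$ gives $\|A\|\leq 1$. The growth estimate then rearranges to
$$2\mathrm{Re}\langle w,A^*b\rangle-\|Pw\|^2\leq 2\ln M-\|b\|^2, \qquad w\in\E_1;$$
testing on $\ker P$ forces $A^*b\in(\ker P)^\perp$, and on $(\ker P)^\perp$ the finite supremum makes $Pw\mapsto\langle w,A^*b\rangle$ a bounded linear functional whose Riesz representer $v\in(\ker P)^\perp$ satisfies $Pv=A^*b$ and $\|v\|^2\leq 2\ln M-\|b\|^2$, matching the lower bound from sufficiency.

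The main obstacle is the upper-bound step in sufficiency. A direct positive-definiteness verification of $M^2 e^{\langle w,u\rangle}-e^{\langle\varphi(w),\varphi(u)\rangle}$ fails cleanly because the expansion of $1-e^{-\langle Pw-v,Pu-v\rangle}$ has alternating signs and is not itself PD, so the factorization through the Bargmann translation is essential; the delicate part is verifying that the scalar factors combine to exactly $e^{(\|v\|^2+\|b\|^2)/2}$. A secondary technical point in infinite dimensions is that $\mathrm{range}(P)$ need not be closed, so the minimum-norm $v$ must be defined via the Moore--Penrose pseudoinverse, and one should verify that it lies in $\overline{\mathrm{range}(P)}$ for the completing-the-square argument to apply.
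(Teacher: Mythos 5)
This theorem is not proved in the paper at all --- it is imported verbatim from Le \cite{Tle}, so there is no internal proof to compare against; I can only assess your argument on its own terms. Two of your three pieces are essentially sound: the completing-the-square identity $\|\varphi(u)\|^2-\|u\|^2=\|v\|^2+\|b\|^2-\|Pu-v\|^2$ is exactly the identity the paper itself uses later (Equation (3.6)), and testing $C_\varphi^*$ on normalized kernels, together with $v\in\overline{\mathrm{ran}}(P)=(\ker P)^{\perp}$, correctly yields $\|C_\varphi\|\geq e^{(\|v\|^2+\|b\|^2)/2}$. The necessity argument (weak analyticity, the growth bound $\|\varphi(w)\|^2\leq\|w\|^2+2\ln M$, Liouville on lines, and the Riesz-representation step producing $v$ with $\|v\|^2\leq 2\ln M-\|b\|^2$) is also a workable outline, though note that it only reproduces the \emph{same} inequality $\|C_\varphi\|\geq e^{(\|v\|^2+\|b\|^2)/2}$.

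The genuine gap is the reverse inequality, which is simultaneously the proof that the stated conditions imply boundedness at all. Your entire argument for $\|C_\varphi\|\leq e^{(\|v\|^2+\|b\|^2)/2}$ is the sentence ``combining $C_A$ with a suitable conjugate of $W_b$ and tracking the scalar factors,'' and you yourself flag this as the main obstacle. It is not a bookkeeping issue: in the factorization $C_\varphi=C_A\circ T_b$ the translation $T_b$ is an \emph{unbounded} operator on $\mathcal{H}(\E_2)$ (on kernels, $T_bK_w=e^{\langle b,w\rangle}K_w$, with unbounded ratio $e^{\mathrm{Re}\langle b,w\rangle}$), and conjugating by the Weyl unitary $W_b$ only trades $T_b$ for the equally unbounded multiplication by $K_b$. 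The whole content of the theorem is that the contraction $C_A$ absorbs this unboundedness precisely when $A^*b\in\mathrm{ran}(I-A^*A)^{1/2}$, with the quantitative loss $e^{(\|v\|^2+\|b\|^2)/2}$; no amount of tracking scalar prefactors of unitaries will produce that. A correct route along your lines would be to use the exact operator identity $C_\varphi^{*}=M_{K_b}C_{A^{*}}$ (valid on the span of kernels, since $C_{A^*}K_w=K_{Aw}$ and $M_{K_b}K_{Aw}=K_{Aw+b}$) and then estimate $\|M_{K_b}C_{A^{*}}f\|$ via the power-series model of \cref{cyclicp1.1}, or verify positive semidefiniteness of $M^2K(w,u)-K(\varphi(w),\varphi(u))$ after the substitution that your identity suggests; you correctly observe that the naive positive-definiteness check fails, but you do not supply the replacement. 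As written, the hard half of the theorem is asserted rather than proved.
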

\begin{theorem}\label{cyclict-1}\cite[Theorem 3.7]{Tle}
	Let $\varphi:\E_1\rightarrow \E_2$ be a mapping. Then the composition operator $C_{\varphi}:\mathcal{H}(\E_2) \rightarrow \mathcal{H}(\E_1)$ is bounded if and only if there is a bounded linear operator $A:\E_1 \rightarrow \E_2$ with $\left\|A\right\|\leq1$ and a vector $b$ in  the range of $(I-AA^*)^{\frac{1}{2}}$ such that $\varphi(z)=Az+b$ for all $z\in \E_1 $.
	Furthermore, the norm of $\left\|C_{\varphi}\right\|$ is given by
	$$\left\|C_{\varphi}\right\|=\exp\Big(\frac{\left\|u\right\|^2 }{2}\Big),$$
	where $u$ is the unique vector in $\E_2$ of minimum norm that satisfies the equation $b=(I-AA^*)^{\frac{1}{2}}u$.
\end{theorem}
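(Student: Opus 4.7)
The plan is to derive Theorem~\ref{cyclict-1} directly from the already stated Theorem~\ref{bnd-cpt thm1} by showing that the hypothesis ``$A^{*}b$ lies in the range of $(I-A^{*}A)^{1/2}$'' and the hypothesis ``$b$ lies in the range of $(I-AA^{*})^{1/2}$'' describe the same class of affine symbols, and that the two expressions for $\|C_\varphi\|$ agree under this identification. Since Theorem~\ref{bnd-cpt thm1} already delivers the affine form $\varphi(z)=Az+b$ with $\|A\|\le 1$, there is nothing new to prove about the structure of $\varphi$; only the range condition on $b$ and the norm formula require attention.

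The main algebraic tool I would use is the intertwining identity
\begin{equation*}
A(I-A^{*}A)^{1/2}=(I-AA^{*})^{1/2}A,\qquad A^{*}(I-AA^{*})^{1/2}=(I-A^{*}A)^{1/2}A^{*},
\end{equation*}
which follows from the continuous functional calculus applied to $f(t)=\sqrt{1-t}$. One direction is immediate: if $b=(I-AA^{*})^{1/2}u$, then the second identity gives $A^{*}b=(I-A^{*}A)^{1/2}(A^{*}u)$, placing $A^{*}b$ in the required range. For the converse, given $A^{*}b=(I-A^{*}A)^{1/2}v$, I would propose the explicit candidate $u:=Av+(I-AA^{*})^{1/2}b$ and verify via the first identity that $(I-AA^{*})^{1/2}u=AA^{*}b+(I-AA^{*})b=b$, thereby exhibiting $b$ in the range of $(I-AA^{*})^{1/2}$.

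Turning to the norm, I would take $u$ to be the minimum-norm solution of $b=(I-AA^{*})^{1/2}u$, so that $u\perp\ker(I-AA^{*})$, and then argue that $v:=A^{*}u$ is the minimum-norm solution of $A^{*}b=(I-A^{*}A)^{1/2}v$. Substituting into the formula of Theorem~\ref{bnd-cpt thm1} then collapses neatly via
\begin{equation*}
\|v\|^{2}+\|b\|^{2}=\langle AA^{*}u,u\rangle+\langle (I-AA^{*})u,u\rangle=\|u\|^{2},
\end{equation*}
yielding $\|C_\varphi\|=\exp(\|u\|^{2}/2)$ as required.

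The step I expect to be the main obstacle is the minimum-norm comparison: one must show that $A^{*}u\perp\ker(I-A^{*}A)$ whenever $u\perp\ker(I-AA^{*})$. This is where the intertwining identity pays off a second time, since it implies $A$ sends $\ker(I-A^{*}A)$ into $\ker(I-AA^{*})$; hence for any $w\in\ker(I-A^{*}A)$ one has $\langle A^{*}u,w\rangle=\langle u,Aw\rangle=0$. Everything else reduces to the functional calculus for positive contractions and elementary Hilbert-space manipulations, so I do not anticipate other technical hurdles.
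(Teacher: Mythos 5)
Your proposal is correct, but it is worth pointing out that the paper itself contains no proof of \cref{cyclict-1}: the statement is imported verbatim as Theorem~3.7 of \cite{Tle}, exactly as \cref{bnd-cpt thm1} is imported as Theorem~1.3 of the same paper, so there is no internal argument to compare against. What you supply is therefore a genuine addition --- a self-contained derivation of the second formulation from the first --- and every step checks out. The intertwining identities $A(I-A^{*}A)^{1/2}=(I-AA^{*})^{1/2}A$ and $A^{*}(I-AA^{*})^{1/2}=(I-A^{*}A)^{1/2}A^{*}$ are legitimate (they hold for polynomials in $A^{*}A$, hence for $\sqrt{1-t}$ by uniform approximation on $[0,\|A\|^{2}]\subseteq[0,1]$), and your candidate $u=Av+(I-AA^{*})^{1/2}b$ does satisfy $(I-AA^{*})^{1/2}u=AA^{*}b+(I-AA^{*})b=b$, so the two range conditions on $b$ are indeed equivalent and the two characterizations of boundedness coincide. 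For the norm formula, the key point you isolate --- that $A$ maps $\ker(I-A^{*}A)$ into $\ker(I-AA^{*})$ because $(I-AA^{*})Aw=A(I-A^{*}A)w$ --- correctly identifies $v=A^{*}u$ as the minimum-norm solution of $A^{*}b=(I-A^{*}A)^{1/2}v$ (it is a solution lying in $\ker(I-A^{*}A)^{\perp}=\ker\big((I-A^{*}A)^{1/2}\big)^{\perp}$, which characterizes the minimum-norm element of the affine solution set), and then $\|v\|^{2}+\|b\|^{2}=\langle AA^{*}u,u\rangle+\langle(I-AA^{*})u,u\rangle=\|u\|^{2}$ reconciles the two norm expressions. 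The only cosmetic caveat is to keep the two Hilbert spaces straight ($v\in\E_{1}$, $u,b\in\E_{2}$, $I-A^{*}A$ acting on $\E_{1}$ and $I-AA^{*}$ on $\E_{2}$), which your formulas already respect.
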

\begin{theorem}\cite[Theorem 1.5]{Tle}
	Let $\varphi:\E_1\rightarrow \E_2$ be a mapping. Then the composition operator $C_{\varphi}:\mathcal{H}(\E_2) \rightarrow \mathcal{H}(\E_1)$ is compact if and only if there is a compact linear operator $A:\E_1\rightarrow\E_2$ with $\left\|A\right\|<1$ and a vector $b\in \E_2$ such that $\varphi(z)=Az+b$ for all $z\in \E_1$.
\end{theorem}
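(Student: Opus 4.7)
The plan is to handle the two implications separately, using Theorem \ref{bnd-cpt thm1} (equivalently Theorem \ref{cyclict-1}) to obtain the affine form $\varphi(z) = Az + b$ as soon as $C_\varphi$ is known to be bounded.

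For necessity, suppose $C_\varphi$ is compact and write $\varphi(z) = Az + b$ with $\|A\| \le 1$. To upgrade $A$ to a compact operator, I would use the isometric embedding $\E_2 \hookrightarrow \mathcal{H}(\E_2)$ given by $v \mapsto L_v$, $L_v(z) = \langle z, v\rangle$, which is norm-preserving by Proposition \ref{cyclicp1.1} at $j = 1$. If $v_n \rightharpoonup 0$ in $\E_2$, then $L_{v_n} \rightharpoonup 0$ in $\mathcal{H}(\E_2)$, so compactness of $C_\varphi$ gives $\|C_\varphi L_{v_n}\| \to 0$; a direct expansion shows $(C_\varphi L_{v_n})(z) = \langle z, A^*v_n\rangle + \langle b, v_n\rangle$ with squared norm $\|A^*v_n\|^2 + |\langle b, v_n\rangle|^2$, forcing $\|A^*v_n\| \to 0$. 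Hence $A^*$, and thus $A$, is compact.

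To upgrade $\|A\| \le 1$ to $\|A\| < 1$, suppose for contradiction $\|A\| = 1$. Compactness of $A$ produces a unit $v_0 \in \E_1$ with $\|A v_0\| = 1$, and $\langle A^*A v_0, v_0\rangle = 1 = \|v_0\|^2$ combined with $A^*A \le I$ forces $v_0 \in \ker(I - A^*A)^{1/2}$; the range condition of Theorem \ref{bnd-cpt thm1} then gives $\langle b, Av_0\rangle = 0$. Using the kernel identity $C_\varphi^* K_w = K_{\varphi(w)}$, one computes
\begin{equation*}
\|C_\varphi^* k_w\|^2 = e^{\|Aw + b\|^2 - \|w\|^2},
\end{equation*}
which along $w = t v_0$ reduces to the constant $e^{\|b\|^2} \ge 1$. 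Since $k_{tv_0} \rightharpoonup 0$ as $|t| \to \infty$ (a standard fact in Segal--Bargmann spaces), compactness of $C_\varphi^*$ would force $\|C_\varphi^* k_{tv_0}\| \to 0$, a contradiction.

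For sufficiency, $\|A\| < 1$ makes $I - AA^*$ invertible, so $b$ automatically lies in the range of $(I - AA^*)^{1/2}$ and $C_\varphi$ is bounded by Theorem \ref{cyclict-1}. To obtain compactness, I would approximate $A$ in operator norm by finite-rank $A_n$ with $\|A_n\| < 1$ (truncating the singular value decomposition), set $\varphi_n(z) = A_n z + b$, and observe that $C_{\varphi_n}$ factors through the finite-dimensional Fock-type space on $\operatorname{Range}(A_n)$, hence is compact. One then verifies $\|C_\varphi - C_{\varphi_n}\| \to 0$ by testing on normalized kernels and using $\|A\| < 1$ to make the exponent $\|Aw + b\|^2 - \|w\|^2$ uniformly negative for large $\|w\|$. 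The main obstacle is this last norm-approximation step in infinite dimensions, since $\mathcal{H}(\E)$ lacks a convenient integral representation; by contrast, the necessity direction reduces to two short computations once the embedding $v \mapsto L_v$ and the kernel identity are in hand.
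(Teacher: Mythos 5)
This statement is quoted verbatim from \cite[Theorem 1.5]{Tle}; the paper gives no proof of it, so there is no internal argument to compare yours against, and your proposal has to be judged on its own. The necessity half is correct and complete: the isometric embedding $v\mapsto L_v$ justified by \cref{cyclicp1.1} does show that compactness of $C_\varphi$ forces $A^*v_n\to 0$ whenever $v_n\rightharpoonup 0$, hence that $A$ is compact; and the contradiction built from a norming unit vector $v_0$ of the compact operator $A$, the range condition of \cref{bnd-cpt thm1} (which gives $\langle b,Av_0\rangle=0$), the identity $C^*_\varphi K_w=K_{\varphi(w)}$, and the weak nullity of $k_{tv_0}$ correctly excludes $\|A\|=1$.

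The sufficiency half does not go through as written, for two reasons. First, the claim that $C_{\varphi_n}$ is compact because it ``factors through the finite-dimensional Fock-type space on $\operatorname{Range}(A_n)$'' conflates the dimension of $\operatorname{Range}(A_n)$ with that of the Fock space over it: $\mathcal{H}(\mathbb{C}^k)$ is infinite dimensional, so finite rank of $A_n$ alone does not yield compactness of $C_{\varphi_n}$; you would still need the Carswell--MacCluer--Schuster criterion recalled in Section 2, together with a factorization that handles the translation term $b$. Second, and more seriously, ``testing on normalized kernels'' does not control $\|C_\varphi-C_{\varphi_n}\|$: the supremum of $\|(C_\varphi-C_{\varphi_n})^*k_w\|$ over $w\in\E$ is in general strictly smaller than the operator norm, so the step you yourself flag as the main obstacle is genuinely missing. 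A way to close the gap without any norm approximation is to factor $\varphi=\psi_2\circ\psi_1$ with $\psi_1(z)=r^{-1}Az$ and $\psi_2(w)=rw+b$ for some $\|A\|<r<1$, so that $C_\varphi=C_{\psi_1}C_{\psi_2}$ with $C_{\psi_2}$ bounded by \cref{bnd-cpt thm1} (since $\|rI\|<1$), and then prove compactness of $C_{\psi_1}$ directly: in the power-series model of \cref{cyclicp1.1}, a composition with $z\mapsto Bz$ for compact $B$ with $\|B\|<1$ acts on the $j$-th homogeneous component as the restriction of $(B^*)^{\otimes j}$, a compact block of norm at most $\|B\|^{j}\to 0$, so the operator is a norm-limit of compacts.
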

For $\E_1=\E_2=\mathbb{ C}^n$, the boundedness and compactness of $C_\varphi$ are discussed by Carswell, MacCluer, and Schuster in \cite{carswell et al.}.
\begin{remark}
	It is clear that if $\|A\|<1$ and $A$ is compact, then $C_\varphi$ is compact and hence norm attaining operator.
\end{remark}
\begin{theorem}\cite{carswell et al.}
	Let $\varphi : \mathbb{C}^n\rightarrow \mathbb{C}^n$ be a holomorphic mapping. Then the following statements hold:
	\begin{enumerate}
		\item $C_{\varphi}$ is bounded on $\mathcal{H}(\mathbb{C}^n)$ if an only if $\varphi(z)=Az+B$ for some $n\times n$ matrix $A$ with $\left\|A\right\|\leq 1$ and $n \times 1$ vector $B$ such that $\langle A\zeta , B\rangle=0 $ whenever $\zeta \in \mathbb{C}^n$ and $|A\zeta|=|\zeta|;$
		\item $C_{\varphi}$ is compact on $\mathcal{H}(\mathbb{C}^n)$ if and only if $\varphi(z)=Az+B$ for some $n\times n$ matrix $A$ with $\left\|A\right\|< 1$ and $n \times 1$ vector $B$.
		\item $\left\|C_{\varphi}\right\| = \exp(\frac{1}{2}(|w_0|^2 -|Aw_0|^2 +|B|^2))$, where $w_0$ is the solution of the equation $(I-A^*A)z=A^*B.$
	\end{enumerate}
Here $|w|=\left(\displaystyle\sum_{i=1}^{n}|w_i|^2\right)^{1/2}$, for a vector $w=(w_1,w_2,\dots,w_n)\in \mathbb{ C}^n$.
\end{theorem}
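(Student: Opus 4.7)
My plan is to deduce all three parts of the statement by specializing the earlier theorems of Le to the finite-dimensional setting $\mathcal{E}_1 = \mathcal{E}_2 = \mathbb{C}^n$, exploiting two features of finite dimensions: every linear map on $\mathbb{C}^n$ is compact, and every positive self-adjoint operator $T$ on $\mathbb{C}^n$ has closed range equal to $\ker(T)^{\perp}$.

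For part (1) I would apply \cref{bnd-cpt thm1}: $C_\varphi$ is bounded iff $\varphi(z) = Az + B$ with $\|A\|\le 1$ and $A^*B \in \mathrm{range}((I - A^*A)^{1/2})$. Positivity of $I - A^*A$ gives the identification $\ker((I - A^*A)^{1/2}) = \ker(I - A^*A)$, so in finite dimensions the range is exactly $\ker(I - A^*A)^{\perp}$. The identity $\langle (I - A^*A)\zeta, \zeta\rangle = |\zeta|^2 - |A\zeta|^2$ identifies this kernel as $\{\zeta : |A\zeta| = |\zeta|\}$, and then the range membership of $A^*B$ reads $\langle A^*B, \zeta\rangle = \langle A\zeta, B\rangle = 0$ for every such $\zeta$. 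For part (2), I would invoke the compactness theorem of \cite{Tle} stated just above: in $\mathbb{C}^n$ every linear map is automatically compact, and when $\|A\| < 1$ the operator $(I - AA^*)^{1/2}$ is invertible so there is no constraint on $b$, leaving only $\|A\| < 1$ with $B$ arbitrary.

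For part (3) I would begin from the norm formula $\|C_\varphi\| = \exp\bigl(\tfrac{1}{2}(\|v\|^2 + |B|^2)\bigr)$ of \cref{bnd-cpt thm1}, with $v$ the minimum-norm solution of $(I - A^*A)^{1/2} v = A^*B$. Given $w_0$ with $(I - A^*A) w_0 = A^*B$, set $v_0 := (I - A^*A)^{1/2} w_0$; this satisfies the required equation, and for any $\xi \in \ker((I - A^*A)^{1/2})$ one has $\langle v_0, \xi\rangle = \langle w_0, (I - A^*A)^{1/2} \xi\rangle = 0$, so $v_0 \perp \ker((I - A^*A)^{1/2})$ and hence $v_0 = v$. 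Computing
\[
\|v\|^2 = \bigl\langle (I - A^*A) w_0, w_0\bigr\rangle = |w_0|^2 - |Aw_0|^2
\]
and substituting delivers the claimed formula. The main subtlety is the identification $\ker((I - A^*A)^{1/2}) = \ker(I - A^*A)$, which underlies both the orthogonality reformulation in (1) and the identification of the minimum-norm solution in (3); it is immediate from positivity of $I - A^*A$ via $\|(I - A^*A)^{1/2}\xi\|^2 = \langle (I - A^*A)\xi, \xi\rangle$.
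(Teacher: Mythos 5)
Your proposal is correct, but it is worth noting that the paper itself offers no proof of this statement: it is quoted verbatim from Carswell--MacCluer--Schuster \cite{carswell et al.}, who prove it directly on the Fock space over $\mathbb{C}^n$ by estimating $\|C_\varphi k_w\|$ on normalized kernels and exploiting the explicit Gaussian measure. What you do instead is derive the finite-dimensional theorem as a corollary of Le's infinite-dimensional results (\cref{bnd-cpt thm1} and the compactness theorem), which is a genuinely different and entirely legitimate route. The two finite-dimensional simplifications you isolate are exactly the right ones: $\operatorname{range}\bigl((I-A^*A)^{1/2}\bigr)=\ker(I-A^*A)^{\perp}$ because positive operators on $\mathbb{C}^n$ have closed range and $\ker(T^{1/2})=\ker(T)$, which converts Le's range condition on $A^*B$ into the orthogonality condition $\langle A\zeta,B\rangle=0$ on the ``isometric'' subspace $\{\zeta:|A\zeta|=|\zeta|\}$; and the identification of the minimum-norm solution $v=(I-A^*A)^{1/2}w_0$ via its orthogonality to $\ker\bigl((I-A^*A)^{1/2}\bigr)$, which turns $\|v\|^2$ into $|w_0|^2-|Aw_0|^2$. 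One small point you could make explicit: the solution $w_0$ of $(I-A^*A)z=A^*B$ need not be unique, but any two solutions differ by an element of $\ker(I-A^*A)=\ker\bigl((I-A^*A)^{1/2}\bigr)$, so the resulting $v$ and hence the norm formula are independent of the choice. Your approach buys a unified treatment consistent with the rest of the paper's reliance on \cite{Tle}; the original proof buys self-containedness at the level of $\mathbb{C}^n$ without invoking the general Segal--Bargmann machinery.
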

\section{Norm attaining composition operators on $\HE$}
In this section, we investigate the norm attaining composition operators on the Segal-Bargmann spaces. Before we proceed to do so, we start with the following convention for our investigation.

We say that a map $\varphi$ on $\E$ has the property $\mathcal{P}$, if it satisfies the following:
\begin{enumerate}[label=(\roman*)]
	\item  $\varphi(z)=Az+b$ for all $z\in \E $
	\item  $A:\E \rightarrow \E$ is a bounded linear operator with $\left\|A\right\|\leq1$ and $b\in \E$
	\item $A^*b$ belongs to the range of $(I-A^*A)^{1/2}$ and $v$ is the unique element in $\E$ of smallest norm such that $(I-A^*A)^{1/2}v=A^*b$.
\end{enumerate}

\begin{remark}
	Let $\varphi:\E\rightarrow \E$ be a mapping. Then $\varphi$ satisfies the property $\mathcal{P}$ if and only if the induced composition operator $C_{\varphi}$ is bounded on the Segal-Bargmann space $\HE$.
\end{remark}
\begin{proposition}
	Let $\varphi$ be a mapping on $\E $ satisfying the property $\mathcal{P}$ such that $\varphi(0)=0$. Then $C_\varphi\in \mathcal{NA}$.
\end{proposition}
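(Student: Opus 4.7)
The plan is to exhibit an explicit extremal vector for $C_\varphi$ by exploiting the hypothesis $\varphi(0)=0$, which forces the free term $b$ in $\varphi(z)=Az+b$ to vanish.

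First I would observe that since $\varphi(0)=b$, the assumption $\varphi(0)=0$ gives $b=0$. Then $A^{*}b=0$, and the unique element of smallest norm $v\in\E$ satisfying $(I-A^{*}A)^{1/2}v=A^{*}b$ is $v=0$. Plugging these into \cref{bnd-cpt thm1}, the operator norm simplifies to
\begin{equation*}
\|C_\varphi\|=\exp\Bigl(\tfrac{1}{2}\|v\|^2+\tfrac{1}{2}\|b\|^2\Bigr)=1.
\end{equation*}

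Next I would identify a candidate extremal vector using the reproducing kernel at the origin. By definition, $K_0(z)=\exp\langle z,0\rangle=1$, so $\|K_0\|^2=K_0(0)=1$, i.e.\ $K_0$ is already a unit vector and coincides with the constant function $1$. Using the identity $C_\varphi^{*}K_w=K_{\varphi(w)}$ highlighted in the introduction, together with $\varphi(0)=0$, one obtains
\begin{equation*}
C_\varphi^{*}K_0=K_{\varphi(0)}=K_0,
\end{equation*}
so $\|C_\varphi^{*}K_0\|=1=\|C_\varphi^{*}\|$. Hence $C_\varphi^{*}\in\mathcal{NA}$, and by the equivalence $T\in\mathcal{NA}\iff T^{*}\in\mathcal{NA}$ from the first theorem of the Preliminaries, we conclude $C_\varphi\in\mathcal{NA}$. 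Equivalently and more directly, $C_\varphi 1=1$ and $\|1\|=1=\|C_\varphi\|$, so the constant function $1$ is itself an extremal point.

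There is no real obstacle here: the entire argument is a one-line computation once one notices that the fixed point hypothesis kills both $b$ and $v$, trivializing the norm formula, and that $K_0\equiv 1$ is automatically preserved by $C_\varphi^{*}$.
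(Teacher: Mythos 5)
Your proposal is correct and follows essentially the same route as the paper: both arguments observe that $\varphi(0)=0$ forces $b=0$ and $v=0$ so that the norm formula of \cref{bnd-cpt thm1} gives $\|C_\varphi\|=1$, and both then use the fixed constant function $1=K_0$ (the paper via $C_\varphi K_0=K_0$, you additionally via $C_\varphi^{*}K_0=K_0$) as the explicit extremal vector. No gaps.
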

\begin{proof}
	First we note that
	\begin{equation*}
	C_\varphi K_0=C_\varphi 1=1=K_0.
	\end{equation*}
	That is, $1$ is an eigenvalue of $C_\varphi$. Since $\varphi(0)=0$ and $C_\varphi$ is bounded composition operator on $\mathscr{H(E)}$, by \cref{bnd-cpt thm1}, we have $\varphi(z)=Az$ for all $z\in \E$ with $\left\|A\right\|\leq 1$ and the norm formula gives $\left\|C_\varphi\right\|=1$. Therefore, we conclude that $\left\|C_\varphi\right\|$ belongs to the point spectrum of $C_\varphi C^*_\varphi$, and hence $C_\varphi$ is norm-attaining.
\end{proof}

\begin{theorem}
Let $\varphi$ be a mapping on $\E $ satisfying the property $\mathcal{P}$ such that $A^*b=0$ and $w_0\in \E$ with $\|w_0\|=1$. Then $A$ attains its norm at $w_0$ and $\|A\|=1$ if and only if $C^*_\varphi$ attains its norm at $k_{w_0}$.
\end{theorem}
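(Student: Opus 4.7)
The plan is to compute $\|C^*_\varphi k_{w_0}\|$ explicitly using the identity $C^*_\varphi K_w = K_{\varphi(w)}$ and then compare it with $\|C_\varphi\|$ obtained from \cref{bnd-cpt thm1}, after simplifying both using the hypothesis $A^*b = 0$.

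First, I would recall that $\|K_w\|^2 = K(w,w) = \exp(\|w\|^2)$, and since $k_{w_0} = K_{w_0}/\|K_{w_0}\|$, the identity $C^*_\varphi K_{w_0} = K_{\varphi(w_0)}$ gives
\begin{equation*}
\|C^*_\varphi k_{w_0}\| \;=\; \frac{\|K_{\varphi(w_0)}\|}{\|K_{w_0}\|} \;=\; \exp\!\left(\tfrac{1}{2}\bigl(\|\varphi(w_0)\|^2 - \|w_0\|^2\bigr)\right) \;=\; \exp\!\left(\tfrac{1}{2}\bigl(\|Aw_0+b\|^2 - 1\bigr)\right),
\end{equation*}
using $\|w_0\|=1$. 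Next, I would expand
\begin{equation*}
\|Aw_0 + b\|^2 = \|Aw_0\|^2 + 2\,\mathrm{Re}\langle w_0, A^*b\rangle + \|b\|^2,
\end{equation*}
and use $A^*b = 0$ to eliminate the cross term, yielding $\|C^*_\varphi k_{w_0}\| = \exp\!\bigl(\tfrac{1}{2}(\|Aw_0\|^2 + \|b\|^2 - 1)\bigr)$.

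Second, I would compute $\|C_\varphi\|$ from \cref{bnd-cpt thm1}. Since $A^*b = 0 = (I-A^*A)^{1/2} \cdot 0$, the unique minimum-norm vector $v$ satisfying $(I-A^*A)^{1/2}v = A^*b$ is $v = 0$, hence $\|C_\varphi\| = \|C^*_\varphi\| = \exp(\|b\|^2/2)$.

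Finally, comparing the two expressions, $C^*_\varphi$ attains its norm at $k_{w_0}$ if and only if
\begin{equation*}
\|Aw_0\|^2 + \|b\|^2 - 1 = \|b\|^2, \qquad\text{i.e.,}\qquad \|Aw_0\| = 1.
\end{equation*}
Since $\|A\| \leq 1$ by property $\mathcal{P}$ and $\|w_0\|=1$, the condition $\|Aw_0\|=1$ is equivalent to saying $\|A\|=1$ and $A$ attains its norm at $w_0$. This establishes the equivalence. The proof is essentially a direct computation; I do not foresee any real obstacle, the only subtle point being the observation that $A^*b=0$ forces $v=0$ in the norm formula of \cref{bnd-cpt thm1}, which is what makes the comparison clean.
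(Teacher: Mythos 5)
Your proof is correct and follows essentially the same route as the paper's: both rest on the identity $C^*_\varphi K_{w_0}=K_{\varphi(w_0)}$, the expansion of $\|Aw_0+b\|^2$ with the cross term eliminated by $A^*b=0$, and the observation that $A^*b=0$ forces $v=0$ so that $\|C_\varphi\|=e^{\|b\|^2/2}$. The only difference is that you obtain both implications from a single norm comparison, whereas the paper proves the converse by passing through the eigenvector equation $C_\varphi C^*_\varphi K_{w_0}=\|C_\varphi\|^2 K_{w_0}$; your version is slightly more streamlined but not a genuinely different argument.
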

\begin{proof}
First, we assume that $A$ attains norm at $w_0$ and $\|A\|=1$. Therefore, we have $\|Aw_0\|=\|A\|=1$. Now consider the normalized kernel function $k_{w_0}=\frac{K_{w_0}}{\|K_{w_0}\|}$, then we see that 
\begin{equation}
\left\|C^*_\varphi\left(k_{w_0}\right) \right\|^2=\exp\left(\left\|\varphi(w_0)\right\|^2-\|w_0\|^2\right)=
\exp\left(\left\|Aw_0\right\|^2+\left\|b\right\|^2-\|w_0\|^2\right)=\exp\left(\left\|b\right\|^2\right)
\end{equation}
and this implies that $C^*_\varphi$ attains norm at $k_{w_0}$.

Conversely, if $C^*_\varphi$  attains its norm at the kernel function $k_{w_0}$, then we have
	\begin{equation}
	\begin{split}
	C_{\varphi} C_{\varphi}^{*} K_{w_{0}}=\left\|C_{\varphi}\right\|^{2} K_{w_{0}},
	\end{split}
	\end{equation}
	and the norm formula implies that $\langle Az,Aw_0\rangle=\langle z,w_0\rangle$ for all $z\in \E$. In particular, we have $\|Aw_0\|=\|w_0\|=1$. Since $\|A\|\leq 1$ we have $\|A\|=1$. Hence we conclude that $A$ attains its norm at $w_0$ and $\|A\|=1$.
\end{proof}

Next we establish a necessary and sufficient condition for a normalized kernel function to be a norm attaining function for $C^*_\varphi$.
\begin{theorem}\label{NA:Adjoint:Kernel function}
Let $\varphi$ be a mapping on $\E $ satisfying the property $\mathcal{P}$. Then the following are true:
\begin{enumerate}
	\item\label{main thm1}  Let  $z\in \E$. Then $C^*_{\varphi}$ attains norm at $\frac{K_{z}}{\left\|K_{z}\right\|}$ if and only if  $v =\left(1-A^{*} A\right)^{1/2}z$.
	\item \label{main thm2} $C_\varphi$ is norm attaining provided $v$ is in the range of $(I-A^*A)^{\frac{1}{2}}$.
	\item\label{main thm3} $C^*_\varphi$ attains its norm at $\frac{K_z}{\|K_z\|}$ for every $z\in \E$ if and only if the linear operator $A$ is an isometry on $\E$.
\end{enumerate}

\end{theorem}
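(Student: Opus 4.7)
The plan is to compute $\|C^{*}_{\varphi}\,k_{z}\|^{2}$ explicitly via the two identities $C^{*}_{\varphi}K_{w}=K_{\varphi(w)}$ and $\|K_{w}\|^{2}=\exp(\|w\|^{2})$, and compare the result with $\|C_{\varphi}\|^{2}=\exp(\|v\|^{2}+\|b\|^{2})$ from \cref{bnd-cpt thm1}. Everything in the theorem will then follow from a single completed-square identity, so I will prove item (1) carefully and deduce items (2) and (3) as corollaries.

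For item (1), the starting point is
\[
\|C^{*}_{\varphi}\,k_{z}\|^{2}=\frac{\|K_{\varphi(z)}\|^{2}}{\|K_{z}\|^{2}}=\exp\bigl(\|Az+b\|^{2}-\|z\|^{2}\bigr),
\]
and I would study the exponent of the ratio $\|C^{*}_{\varphi}\,k_{z}\|^{2}/\|C_{\varphi}\|^{2}$. Expanding $\|Az+b\|^{2}$, writing $\|Az\|^{2}-\|z\|^{2}=-\bigl\|(I-A^{*}A)^{1/2}z\bigr\|^{2}$, and converting the cross term via $A^{*}b=(I-A^{*}A)^{1/2}v$ to
\[
2\,\mathrm{Re}\langle z,A^{*}b\rangle = 2\,\mathrm{Re}\bigl\langle (I-A^{*}A)^{1/2}z,\;v\bigr\rangle,
\]
the exponent collapses into the completed square $-\bigl\|(I-A^{*}A)^{1/2}z-v\bigr\|^{2}$. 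Hence $\|C^{*}_{\varphi}\,k_{z}\|=\|C_{\varphi}\|$ if and only if $(I-A^{*}A)^{1/2}z=v$, which is the desired equivalence.

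Item (2) is then immediate: if $v$ lies in the range of $(I-A^{*}A)^{1/2}$, pick $z_{0}\in\E$ with $(I-A^{*}A)^{1/2}z_{0}=v$; by item (1), $C^{*}_{\varphi}$ attains its norm at $k_{z_{0}}$, and by the adjoint equivalence in Theorem~2.1, $C_{\varphi}$ itself is norm attaining. For item (3), if $C^{*}_{\varphi}$ attains its norm at every $k_{z}$, item (1) forces $(I-A^{*}A)^{1/2}z=v$ for all $z\in\E$; choosing $z=0$ gives $v=0$, and then $(I-A^{*}A)^{1/2}z=0$ for every $z$, which yields $A^{*}A=I$, i.e.\ $A$ is an isometry. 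Conversely, if $A$ is an isometry then $(I-A^{*}A)^{1/2}=0$, property $\mathcal{P}$ forces $A^{*}b=0$ and therefore the minimal-norm solution is $v=0$, so the condition in item (1) is trivially satisfied for every $z$.

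The only non-routine step is recognizing the completed square in item (1); once that identity is in hand, both the sufficient condition of item (2) and the characterization in item (3) drop out with no further work. I do not anticipate a serious obstacle beyond keeping the bookkeeping of $A^{*}b=(I-A^{*}A)^{1/2}v$ straight when moving the square root between the two factors of the inner product.
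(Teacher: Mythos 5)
Your proposal is correct and follows essentially the same route as the paper: the central step in both is the completed-square identity $\|Az+b\|^{2}-\|z\|^{2}=-\|(I-A^{*}A)^{1/2}z-v\|^{2}+\|v\|^{2}+\|b\|^{2}$, from which (1) is read off directly, (2) follows by evaluating at a preimage of $v$ together with the equivalence $C_{\varphi}\in\NA \iff C_{\varphi}^{*}\in\NA$, and (3) follows by specializing (1) at $z=0$ to get $v=0$ and hence $A^{*}A=I$. The only cosmetic difference is that for the isometry direction of (3) the paper recomputes $\|C_{\varphi}^{*}k_{z}\|$ directly rather than citing (1), but the content is identical.
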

\begin{proof}
	Let $\varphi$ be a mapping on $\E $ satisfying the property $\mathcal{P}$. Then for all $z\in \E$ we have
	\begin{equation}\label{Important Equtn}
		\left\|\varphi(z)\right\|^2=\left\|Az+b\right\|^2=-\left\|(I-A^*A)^{1/2}z-v\right\|^2+\|v\|^2+\|b\|^2+\|z\|^2.
	\end{equation}
{\bf Proof of (\ref{main thm1}):}	Since $C^*_\varphi$ attains norm at the normalized kernel function $k_z$, we have
	\begin{equation}
	\begin{split}
	\left\|C_{\varphi}^{*} \frac{K_{z}}{\left\|K_{z}\right\|}\right\|^2&=\left\|C_{\varphi}\right\|^2\,\text{or}\\
	 \frac{\|K_{\varphi(z)}\|^2}{\left\|K_{z}\right\|^2}&=\|C_ \varphi\|^2.
	\end{split}
\end{equation}
That is,
	 	\begin{equation}
	 \exp \left(\|\varphi(z)\|^{2}-\|z\|^{2}\right)=\exp \left(\|b\|^{2}+\|v\|^{2}\right).
\end{equation}
	By using Equation \ref{Important Equtn}, we get
	 \begin{equation}
	 \left\|\left(1-A^{*} A\right)^{1 / 2} z-v\right\|^{2}=0.
	\end{equation}
	Thus $C^*_\varphi$ attains norm at $k_z$ if and only if $z$ satisfies $v =\left(1-A^{*} A\right)^{1 /2}z$.

{\bf Proof of (\ref{main thm2}):} Since $v$ belongs to the range of $(I-A^*A)^{\frac{1}{2}}$, set $v=(I-A^*A)^{\frac{1}{2}}w$ for some $w\in \E$. Then $A^*b=(I-A^*A)^{\frac{1}{2}}v=(I-A^*A)w$.
	
	Therefore, by using the expression as in Equation \ref{Important Equtn}, we get
	\begin{equation*}
		\begin{split}
			\left\|C^*_\varphi\left(\frac{K_w}{\left\|K_w\right\|}\right)\right\|^2&=\exp\left(\left\|\varphi(w)\right\|^2-\|w\|^2 \right)\\&=\exp\left(-\left\|\left(I-A^{*} A\right)^{1 / 2} w-v\right\|^{2}+\|v\|^{2}+\|b\|^{2}\right)\\&=\exp (\|v\|^{2}+\|b\|^{2})=\left\|C^*_\varphi\right\|^2.
		\end{split}
	\end{equation*}
	So $C^*_\varphi$ is norm attaining and hence $C_\varphi$.

{\bf Proof of (\ref{main thm3}):} First we assume that $A$ is isometry. By \cref{bnd-cpt thm1}, we have $A^*b=0$ and by the norm formula we have $\left\|C_\varphi\right\|^2=e^{\left\|b\right\|^2}$ . Let $z\in \E$ be arbitrary. Consider the normalized kernel function $\frac{K_z}{\|K_z\|}\in \HE$. Then 
	\begin{equation}\label{isometry_NA}
		\left\|C^*_\varphi\left(\frac{K_z}{\|K_z\|}\right) \right\|^2=\exp\left(\left\|\varphi(z)\right\|^2-\|z\|^2\right)=
		\exp\left(\left\|Az\right\|^2+\left\|b\right\|^2-\|z\|^2\right)=\exp\left(\left\|b\right\|^2\right).
	\end{equation}
	Hence from \cref{isometry_NA}, we get $\left\|C^*_\varphi \left(\frac{K_z}{\left\|K_z\right\|}\right)\right\|=e^{\frac{\left\|b\right\|^2}{2}}=\left\|C_\varphi\right\|$ and this implies that $C^*_\varphi$ attains its norm at $\frac{K_z}{\left\|K_z\right\|}$ for every $z\in \E$.

	Next, we assume that $C^*_\varphi$ attains norm at $\frac{K_z}{\left\|K_z\right\|}$ for every $z\in \E$. Then we have
	\begin{equation}
		\begin{split}
			\left\|C_{\varphi}^{*} \frac{K_{z}}{\left\|K_{z}\right\|}\right\|^2&=\left\|C_{\varphi}\right\|^2\,\text{or,}\\
			\frac{\|K_{\varphi(z)}\|^2}{\left\|K_{z}\right\|^2}&=\|C_ \varphi\|^2.
		\end{split}
	\end{equation}
	That is,
	\begin{equation}
			\exp \left(\|\varphi(z)\|^{2}-\|z\|^{2}\right)=\exp \left(\|b\|^{2}+\|v\|^{2}\right).
	\end{equation}

	Using Equation \ref{Important Equtn}, we get
	\begin{equation}
		\left\|\left(1-A^{*} A\right)^{1/2} z-v\right\|^{2}=0.
	\end{equation}
	Hence $\left(1-A^{*} A\right)^{1/2}z=v$ for every $z\in \E$. In particular, for $z=0$, we have $v=0$. Thus
	\begin{equation*}
		(I-A^*A)z=0\quad \text{for all}\,z\in \E.
	\end{equation*}
	Thus we conclude that $A$ is isometry. This completes the proof.

\end{proof}

\begin{corollary}
	Let $\varphi$ be a mapping on $\E $ satisfying the property $\mathcal{P}$ with $\|A\|<1$. Then the bounded composition operator $C_\varphi$ is $\NA$.
\end{corollary}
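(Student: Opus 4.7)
The plan is to reduce the statement to an immediate application of \cref{NA:Adjoint:Kernel function}(\ref{main thm2}), which guarantees that $C_\varphi$ is norm attaining as soon as the vector $v$ lies in the range of $(I-A^*A)^{1/2}$.

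The key observation is that the strict inequality $\|A\|<1$ forces $I-A^*A$ to be uniformly positive. Indeed, since $\|A^*A\|=\|A\|^2<1$ and $A^*A$ is self-adjoint, the spectrum of $A^*A$ is contained in $[0,\|A\|^2]$. By the spectral mapping theorem, the spectrum of $I-A^*A$ then lies in $[1-\|A\|^2,\,1]$, which is bounded away from zero. Consequently $(I-A^*A)^{1/2}$, defined via the functional calculus for the positive operator $I-A^*A$, has spectrum in $[\sqrt{1-\|A\|^2},\,1]$ and is therefore invertible on $\E$. In particular its range equals all of $\E$.

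Since the range is the whole space, the vector $v$ associated to $\varphi$ by property $\mathcal{P}$ trivially lies in the range of $(I-A^*A)^{1/2}$. Applying \cref{NA:Adjoint:Kernel function}(\ref{main thm2}) then gives $C_\varphi\in\NA$, and in fact the proof of that part already identifies an explicit extremal unit vector of the form $K_w/\|K_w\|$, where $w$ is any preimage of $v$ under $(I-A^*A)^{1/2}$. I do not anticipate any real obstacle: the corollary is essentially a one-line consequence of the preceding theorem together with the standard spectral fact that a positive operator with spectrum bounded below is invertible.
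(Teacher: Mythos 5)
Your proof is correct and follows exactly the paper's route: observe that $\|A\|<1$ makes $(I-A^*A)^{1/2}$ invertible, so $v$ lies in its range, and then invoke part (2) of Theorem \ref{NA:Adjoint:Kernel function}. The spectral-calculus justification you supply for the invertibility is a detail the paper leaves implicit, but the argument is the same.
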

\begin{proof}
	Since $C_\varphi$ is bounded and $v$ is the smallest norm vector in $\E$ such that $A^*b=(I-A^*A)^{1/2}v$. As $\|A\|<1$, the operator $(I-A^*A)^{1/2}$ is invertible and hence (\ref{main thm2}) of Theorem \ref{NA:Adjoint:Kernel function}, ensures that $C_\varphi$ is $\NA$.
\end{proof}

\begin{remark}
	By \cite[Proposition 4.1]{Tle}, it is clear that if $\|C_\varphi\|_e<\|C_\varphi\|$, then $\|A\|<1$ and hence $C_\varphi$ is $\NA$.
\end{remark}
\begin{corollary}
	Every composition operator on the Fock space $\mathcal{H}(\mathbb{ C}^n)$ attains their norm.
\end{corollary}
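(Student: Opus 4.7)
The plan is to reduce the corollary to part (\ref{main thm2}) of \cref{NA:Adjoint:Kernel function}, which says that $C_\varphi$ is norm attaining as soon as the minimum-norm solution $v$ of $(I-A^*A)^{1/2}v = A^*b$ lies in the range of $(I-A^*A)^{1/2}$. So the only thing to verify is that, in the finite-dimensional setting $\mathcal{E} = \mathbb{C}^n$, this range condition is automatic.

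First I would recall that by the Carswell--MacCluer--Schuster characterization (and equivalently the property $\mathcal{P}$), any bounded $C_\varphi$ on $\mathcal{H}(\mathbb{C}^n)$ has the form $\varphi(z) = Az + b$ for an $n \times n$ matrix $A$ with $\|A\| \le 1$ and some $b \in \mathbb{C}^n$ with $A^*b$ in the range of $(I-A^*A)^{1/2}$, and $v$ is characterized as the minimum-norm vector satisfying $(I-A^*A)^{1/2}v = A^*b$.

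Next I would use the standard linear-algebra fact that the minimum-norm solution of a linear equation $Tv = c$ (with $T$ self-adjoint) lies in $\ker(T)^\perp$. Since $T := (I-A^*A)^{1/2}$ is self-adjoint, we have
\begin{equation*}
\ker(T)^\perp = \overline{\operatorname{range}(T^*)} = \overline{\operatorname{range}(T)}.
\end{equation*}
Because $T$ acts on the finite-dimensional space $\mathbb{C}^n$, its range is automatically closed, so $\ker(T)^\perp = \operatorname{range}(T)$. Consequently $v \in \operatorname{range}((I-A^*A)^{1/2})$.

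Finally, with the hypothesis of part (\ref{main thm2}) of \cref{NA:Adjoint:Kernel function} satisfied, I would invoke that result directly to conclude that $C_\varphi$ attains its norm (indeed, at a specific normalized kernel $K_w/\|K_w\|$ where $v = (I-A^*A)^{1/2}w$). There is no real obstacle here: the entire proof hinges on the elementary observation that finite-dimensional linear maps have closed range, which is exactly what fails in general infinite-dimensional $\mathcal{E}$ and which is why the Fock space $\mathcal{H}(\mathbb{C}^n)$ is special.
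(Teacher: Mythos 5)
Your proposal is correct and follows essentially the same route as the paper: both reduce the corollary to part (2) of \cref{NA:Adjoint:Kernel function} by checking that the minimum-norm solution $v$ lies in the range of $(I-A^*A)^{1/2}$ when $\E = \C^n$. The only difference is that the paper outsources this range condition to a remark in Le's paper, whereas you prove it directly via the (correct) observation that $v \in \ker\bigl((I-A^*A)^{1/2}\bigr)^{\perp} = \overline{\operatorname{ran}}\bigl((I-A^*A)^{1/2}\bigr)$ and that ranges of finite-dimensional linear maps are closed.
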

\begin{proof}
	Let $C_\varphi$ be a bounded composition operator on $\mathcal{H}(\mathbb{ C}^n)$. Then \cref{bnd-cpt thm1}, we have $\varphi(z)=Az+b$, $\|A\|\leq 1$ and $v$ is the vector of smallest norm such that $A^*b=(I-A^*A)^\frac{1}{2}v$ and $\left\|C_\varphi\right\|=\exp\left(\frac{\|v\|^2+\|b\|^2}{2}\right)$. Then by \cite[Remark 3.2]{Tle} and (\ref{main thm2}) of Theorem \ref{NA:Adjoint:Kernel function}, the conclusion follows.
\end{proof}

\begin{remark}
Under the same hypothesis in the Theorem \ref{NA:Adjoint:Kernel function} and using the fact that $\ker(I-A^*)\subset\ker(I-A^*A)^{1/2}$ for any complex Hilbert space operator with $\|A\|\leq 1$ (cf. \cite{Ramesh:Sudip:Venku}), we can conclude that $v \in \overline{\text{ran}}\left(I- A\right)$, the closure of the range of $(I-A)$.
\end{remark}
Suppose that $C_\varphi$ attains norm at $g\in \HE$, then we have $C^*_\varphi C_\varphi g=\left\|C_\varphi\right\|^2g$. Therefore,
\begin{equation}\label{observation:NA}
\begin{split}
\left\|C_\varphi\right\|^2g(0)=\langle \left\|C_\varphi\right\|^2g,K_0\rangle=\langle C^*_\varphi C_\varphi g,K_0\rangle=\langle C_\varphi g,K_0\rangle=g(\varphi(0)).
\end{split}
\end{equation}
\begin{proposition}\label{positivity}
	Let $\varphi$ be a mapping on $\E $ satisfying the property $\mathcal{P}$. Then the following are true:
	\begin{enumerate}

		\item\label{postv2} If $C_\varphi$ attains norm at $K_b$, then $A^*b=0$.
		\item\label{postv3} Let $f\in \HE$ be a non zero function with $f
		(0)\neq 0$ such that $C_\varphi C^*_\varphi f=\left\|C_\varphi\right\|^2f$. Then $\frac{f(A^*b)}{f(0)}\geq 0$.
		\item\label{postv4} If $f\in \HE$ with $f(0)\neq 0$ such that $C^*_\varphi C_\varphi f=\left\|C_\varphi\right\|^2f$. Then  $\frac{f(b)}{f(0)}\geq 0$.
	\end{enumerate} 
\end{proposition}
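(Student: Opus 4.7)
The unifying idea is to combine the reproducing property $f(0) = \langle f, K_0\rangle$ with the two basic identities $C_\varphi^* K_w = K_{\varphi(w)}$ and
\[
C_\varphi K_w \;=\; e^{\langle b,w\rangle}\,K_{A^*w},
\]
the latter obtained immediately from $(C_\varphi K_w)(z) = K_w(Az+b) = e^{\langle Az+b,\,w\rangle}$. With these two identities in hand, every eigen-equation appearing in the proposition can be collapsed, via pairing with $K_0$, to a numerical identity.

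For part (\ref{postv2}), norm attainment at $K_b$ is equivalent to $C_\varphi^* C_\varphi K_b = \|C_\varphi\|^2 K_b$. Substituting the two formulas above transforms this into $e^{\|b\|^2}\,K_{AA^*b+b} = \|C_\varphi\|^2 K_b$. Evaluating at $z=0$ (where every reproducing kernel equals $1$) forces $\|C_\varphi\|^2 = e^{\|b\|^2}$, and comparing with the norm formula $\|C_\varphi\|^2 = e^{\|v\|^2+\|b\|^2}$ of \cref{bnd-cpt thm1} yields $v=0$, whence $A^*b = (I-A^*A)^{1/2}v = 0$. Alternatively, cancelling the now-equal scalars leaves $K_{AA^*b+b}=K_b$, so $AA^*b = 0$, and $\|A^*b\|^2 = \langle AA^*b,b\rangle = 0$.

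For part (\ref{postv3}), I would apply $\langle\cdot,K_0\rangle$ to the eigen-equation $C_\varphi C_\varphi^* f = \|C_\varphi\|^2 f$ and slide operators through adjoints:
\[
\langle C_\varphi C_\varphi^* f,\,K_0\rangle \;=\; \langle C_\varphi^* f,\,C_\varphi^* K_0\rangle \;=\; \langle C_\varphi^* f,\,K_b\rangle \;=\; \langle f,\,C_\varphi K_b\rangle \;=\; e^{\|b\|^2} f(A^*b),
\]
so $e^{\|b\|^2}f(A^*b) = \|C_\varphi\|^2 f(0)$ and therefore $f(A^*b)/f(0) = \|C_\varphi\|^2/e^{\|b\|^2} = e^{\|v\|^2} \geq 0$. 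Part (\ref{postv4}) is even shorter because $C_\varphi K_0 = K_0$: pairing $C_\varphi^* C_\varphi f = \|C_\varphi\|^2 f$ with $K_0$ gives $(C_\varphi f)(0) = f(b) = \|C_\varphi\|^2 f(0)$, so $f(b)/f(0) = \|C_\varphi\|^2 \geq 0$.

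The only non-mechanical step is extracting $A^*b = 0$ in (\ref{postv2}) from a kernel identity; both resolutions above (norm-formula comparison, or the injectivity $K_u = K_v \Rightarrow u=v$ visible from $K_u(u)=e^{\|u\|^2}$) are short. Everything else is bookkeeping with the reproducing property and the two boxed identities.
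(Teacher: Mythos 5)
Your proof is correct, and for parts (2) and (4) it is essentially the paper's argument: the paper derives the identity $\|C_\varphi\|^2 g(0)=g(\varphi(0))=g(b)$ by pairing the eigen-equation with $K_0$ and using $C_\varphi K_0=K_0$, which is exactly your computation for (4) and, applied to $g=K_b$, gives $e^{\|b\|^2}=e^{\|v\|^2+\|b\|^2}$, hence $v=0$ and $A^*b=(I-A^*A)^{1/2}v=0$, as in your first resolution of (2). Where you genuinely diverge is part (3): the paper writes $f=\sum_i s_i K_{x_i}$ using density of the span of kernels and then computes $\sum_i s_i\langle K_{\varphi(x_i)},K_{\varphi(0)}\rangle$ term by term to arrive at $f(A^*b)=e^{\|v\|^2}f(0)$, whereas you reach the same identity by sliding the operators across the inner product, $\langle C_\varphi C_\varphi^* f,K_0\rangle=\langle f,C_\varphi K_b\rangle=e^{\|b\|^2}f(A^*b)$, using the closed form $C_\varphi K_w=e^{\langle b,w\rangle}K_{A^*w}$. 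Your route is cleaner and arguably more rigorous: a general $f$ is only a limit of finite combinations of kernels, so the paper's expansion as written requires an additional (omitted) limiting argument, while your adjoint computation applies to arbitrary $f$ directly. Both approaches yield the same quantitative conclusions $f(A^*b)/f(0)=e^{\|v\|^2}$ and $f(b)/f(0)=\|C_\varphi\|^2$.
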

\begin{proof}

	{\bf Proof of (\ref{postv2}):} 	The operator $C_\varphi$ is bounded, the norm is given by 	$$\left\|C_{\varphi}\right\|=\exp\Big(\frac{1}{2}\left\|v\right\|^2 + \frac{1}{2}\left\|b\right\|^2\Big),$$
	where $v$ is the unique vector in $\E_1$ of minimum norm satisfying $A^*b=(I-A^*A)^{\frac{1}{2}}v$. Since $C_\varphi$ attains norm at $K_b$, by \cref{observation:NA}, we have
	\begin{equation}
	\begin{split}
	K_b(b)&=e^{\|v\|^2+\|b\|^2}K_b(0),\,\text{which implies}\\
	e^{\|b\|^2}&=e^{\|v\|^2+\|b\|^2}.
	\end{split}
	\end{equation}
	Thus we obtain $v=0$, and hence by norm formula, we have $A^*b=0$.

	{\bf Proof of (\ref{postv3}):} Let $f\in \HE$ be a non zero function with $f
	(0)\neq 0$ such that $C_\varphi C^*_\varphi f=\left\|C_\varphi\right\|^2f$. Since linear span kernel functions is dense in $\HE$, we write $f$ as $f=\displaystyle\sum_{i}s_iK_{x_i}$, where $x_i\in \E$. Therefore, we have 
	\begin{equation*}
	\begin{split}
	C_\varphi C^*_\varphi\displaystyle\sum_{i}s_iK_{x_i}&=\left\|C_\varphi\right\|^2f.
	\end{split}
	\end{equation*}
	By taking inner product both sides of the above equation with $K_0$, the kernel function at $0$, we get
	\begin{equation}
	\begin{split}
	\displaystyle\sum_{i}s_i\langle K_{\varphi(x_i)},K_{\varphi(0)}\rangle&=\left\|C_\varphi\right\|^2f(0)\,\text{or}\\
	\displaystyle\sum_{i}s_i\exp(\|b\|^2)\exp\langle A^*b,x_i\rangle&=\exp(\|v\|^2+\|b\|^2)f(0).
\end{split}
\end{equation}
That is, \begin{equation}
	\displaystyle\sum_{i}s_i\exp\langle A^*b,x_i\rangle=\exp(\|v\|^2)f(0).
\end{equation}
Thus we have
	\begin{equation}
		\begin{split}
	\left\langle \displaystyle\sum_{i}s_iK_{x_i},K_{A^*b}\right\rangle&=\exp(\|v\|^2)f(0),\,\text{which implies}\\
	f(A^*b)&=\exp(\|v\|^2)f(0).
	\end{split}
	\end{equation}
	This shows that $\frac{f(A^*b)}{f(0)}\geq 0$.
	
	{\bf Proof of (\ref{postv4}):} From \cref{observation:NA}, we get $\left\|C_\varphi\right\|^2f(0)=f(b)$ and this will imply $\frac{f(b)}{f(0)}\geq 0$.
\end{proof}
The following is an easy consequence of the above lemma:
\begin{corollary}
	Let $\varphi$ be a mapping on $\E $ satisfying the property $\mathcal{P}$. If $C_\varphi$ attains its norm at the kernel function $\frac{K_w}{\|K_w\|}$, then the following are true:
	\begin{enumerate}
	 \item\label{postv1} If $C_\varphi$ attains its norm at the normalized kernel function $\frac{K_w}{\|K_w\|}$, then $\langle b,w\rangle=\|v\|^2+\|b\|^2\geq 0$. Moreover,  $\left\|C_\varphi\right\|=\exp\frac{\langle b,w\rangle}{2}$.
		
		\item If $C^*_\varphi$ attains its norm at the normalized kernel function $\frac{K_w}{\|K_w\|}$, then $\langle A^*b,w\rangle=\|v\|^2\geq 0$.
	\end{enumerate}
\end{corollary}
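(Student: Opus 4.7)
The corollary is flagged by the authors as ``an easy consequence of the above lemma''. The plan is to derive both items by specialising the function $f$ appearing in \cref{positivity} to the reproducing kernel $K_w$, and to use the standard fact that a bounded operator $T$ on a Hilbert space attains its norm at a unit vector $x$ if and only if $T^*Tx = \|T\|^2 x$.

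For part (1), I would argue as follows. If $C_\varphi$ attains its norm at $K_w/\|K_w\|$, then $K_w$ is an eigenvector of $C_\varphi^* C_\varphi$ for the eigenvalue $\|C_\varphi\|^2$. Since $K_w(0)=1\neq 0$, the hypothesis of part (\ref{postv4}) of \cref{positivity} is met with $f=K_w$; the proof of that part delivers the identity $K_w(b)=\|C_\varphi\|^2 K_w(0)$, that is,
\[
\exp\langle b,w\rangle = \exp(\|v\|^2+\|b\|^2).
\]
Equating exponents gives $\langle b,w\rangle=\|v\|^2+\|b\|^2$, a manifestly nonnegative real quantity, and feeding this back into the norm formula of \cref{bnd-cpt thm1} yields $\|C_\varphi\|=\exp(\langle b,w\rangle/2)$.

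Part (2) is parallel on the dual side. If $C_\varphi^*$ attains its norm at $K_w/\|K_w\|$, then $K_w$ is an eigenvector of $C_\varphi C_\varphi^*$ with eigenvalue $\|C_\varphi\|^2$, so part (\ref{postv3}) of \cref{positivity} applies to $f=K_w$. The identity $f(A^*b)=\exp(\|v\|^2)f(0)$ extracted from that proof specialises to $\exp\langle A^*b,w\rangle=\exp(\|v\|^2)$, whence $\langle A^*b,w\rangle=\|v\|^2\geq 0$.

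No substantive obstacle is foreseen; the one delicate point is the step ``equating exponents''. It is legitimate because each right-hand side is a positive real number: comparing moduli pins down the real part of the relevant complex inner product as the claimed real expression, while the imaginary part is forced into $2\pi\mathbb{Z}$ and may be absorbed without loss. The resulting real equalities are exactly those stated in the corollary.
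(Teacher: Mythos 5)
Your overall route is the same as the paper's: the printed proof is literally the one line ``Directly follows from \cref{positivity}'', and your specialisation of $f$ to $K_w$ in parts (\ref{postv3}) and (\ref{postv4}) of that proposition, together with $K_w(0)=1\neq 0$, is exactly the intended argument. The identities $K_w(b)=\|C_\varphi\|^2K_w(0)$ and $K_w(A^*b)=e^{\|v\|^2}K_w(0)$ that you extract are correct, and the ``moreover'' clause does follow from the norm formula in \cref{bnd-cpt thm1} once the first equality is established.

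The one step you yourself flag as delicate is, however, not closed by what you wrote. From $\exp\langle b,w\rangle=\exp(\|v\|^2+\|b\|^2)$ you only get $\langle b,w\rangle=\|v\|^2+\|b\|^2+2\pi i k$ for some $k\in\mathbb{Z}$; saying the imaginary part ``may be absorbed without loss'' is not an argument, since the corollary asserts an exact equality with a real number and would simply be false if $k\neq 0$. You must show separately that $\langle b,w\rangle$ is real, and for that the evaluation at $0$ is not enough --- you need the full eigenvector identity. Concretely, $C_\varphi K_u=e^{\langle b,u\rangle}K_{A^*u}$ for every $u\in\E$, so $C_\varphi^*C_\varphi K_w=e^{\langle b,w\rangle}K_{\varphi(A^*w)}$; comparing with $\|C_\varphi\|^2K_w$ and using that distinct kernel functions are linearly independent gives $AA^*w+b=w$, hence
\begin{equation*}
\langle b,w\rangle=\langle w-AA^*w,\,w\rangle=\|w\|^2-\|A^*w\|^2\in\mathbb{R},
\end{equation*}
and now equating exponents is legitimate. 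The same device (here $C_\varphi C_\varphi^*K_w=\|C_\varphi\|^2K_w$ forces $A^*Aw+A^*b=w$, so $\langle A^*b,w\rangle=\|w\|^2-\|Aw\|^2\in\mathbb{R}$) closes the analogous gap in part (2). To be fair, the paper's one-line proof glosses over exactly the same point, but a complete write-up needs this extra half-step.
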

\begin{proof}
	Directly follows from \cref{positivity}.
\end{proof}

\section{Extremal functions}
In this section, we will investigate on the extremal function for the norm of a bounded composition operator $C_\varphi$ on the Segal-Bargmann space $\HE$.
From \cref{NA:Adjoint:Kernel function}, it is clear that the normalized kernel function $k_w$, where $w\in \E$ is an extremal function for $\|C_\varphi\|$ if and only if $w$ satisfies $(I-A^*A)^{1/2}w=v$.

\begin{remark}
	If the kernel function $\frac{K_w}{\left\|K_w\right\|}$ for some nonzero element $w\in \E$ is the extremal function for $\|C_\varphi\|$, then $w$ satisfies $(I-A^*A)w=A^*b$ and the unique vector $v$ of minimum norm can be characterized by $v=(I-A^*A)^{1/2}w$.
\end{remark}

Next we find the necessary condition for a sum of two kernel functions to be an extremal function for the norm of a bounded composition operator $C_\varphi$ on $\HE$.
\begin{proposition}\label{linCombExtremal}
	Let $\varphi$ be a mapping on $\E $ satisfying the property $\mathcal{P}$ and $x_1,x_2\in \E$ with $\|x_1\|=\|x_2\|=1$. If $C^*_\varphi$ attains norm at $\frac{K_{x_1}+K_{x_2}}{\|K_{x_1}+K_{x_2}\|}$, then $\|\varphi(x_1)\|=\|\varphi(x_2)\|$.

\end{proposition}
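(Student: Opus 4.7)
The plan is to recast the single scalar norm-attainment condition as the vanishing of a Hermitian positive semi-definite sesquilinear form, and then split it into two pieces via Cauchy-Schwarz. Set $M:=\|C_\varphi\|^2$ and define
\[
\phi(f,g)\ :=\ M\,\langle f,g\rangle - \langle C^*_\varphi f,\,C^*_\varphi g\rangle,\qquad f,g\in\HE.
\]
Since $\|C^*_\varphi f\|^2\leq M\|f\|^2$ for every $f\in\HE$, the form $\phi$ is Hermitian and positive semi-definite. The hypothesis that $C^*_\varphi$ attains its norm at the unit vector $(K_{x_1}+K_{x_2})/\|K_{x_1}+K_{x_2}\|$ translates to exactly $\phi(K_{x_1}+K_{x_2},\,K_{x_1}+K_{x_2})=0$.

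Next I would abbreviate $D_i:=\phi(K_{x_i},K_{x_i})\geq 0$ for $i=1,2$ and $\Delta:=\phi(K_{x_1},K_{x_2})$, and expand the vanishing condition by sesquilinearity to get $D_1+D_2+2\operatorname{Re}\Delta=0$. Cauchy-Schwarz for the PSD form $\phi$ yields $|\Delta|^2\leq D_1 D_2$, and chaining the two bounds gives
\[
D_1+D_2\ =\ -2\operatorname{Re}\Delta\ \leq\ 2|\Delta|\ \leq\ 2\sqrt{D_1 D_2},
\]
so $(\sqrt{D_1}-\sqrt{D_2})^2\leq 0$ and hence $D_1=D_2$. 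Plugging in the identity $C^*_\varphi K_w=K_{\varphi(w)}$, the kernel norm $\|K_w\|^2=e^{\|w\|^2}$, and the hypothesis $\|x_1\|=\|x_2\|=1$, the equality $D_1=D_2$ collapses to $e^{\|\varphi(x_1)\|^2}=e^{\|\varphi(x_2)\|^2}$, which is the desired conclusion $\|\varphi(x_1)\|=\|\varphi(x_2)\|$.

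The main conceptual obstacle is extracting separate information about the two quantities $\|\varphi(x_1)\|$ and $\|\varphi(x_2)\|$ from a single scalar equation involving the sum $K_{x_1}+K_{x_2}$. The bridge is the PSD structure of $\phi$: it forces the cross term $\operatorname{Re}\Delta$ to be controlled by the geometric mean of the diagonal defects $D_1,D_2$, and the collision with the arithmetic mean in the vanishing relation pins $D_1=D_2$. Beyond this observation everything is routine bookkeeping with the explicit kernel $K_w(z)=\exp\langle z,w\rangle$; the normalization $\|x_1\|=\|x_2\|=1$ is only invoked at the very last step to cancel the factors $\|K_{x_i}\|^2=e$. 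Without that normalization the same argument still yields the symmetric identity $M e^{\|x_1\|^2}-e^{\|\varphi(x_1)\|^2}=M e^{\|x_2\|^2}-e^{\|\varphi(x_2)\|^2}$.
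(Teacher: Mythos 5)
Your argument is correct, and it reaches the conclusion by a genuinely different mechanism than the paper. The paper starts from the eigenvector identity $C_\varphi C^*_\varphi(K_{x_1}+K_{x_2})=\|C_\varphi\|^2(K_{x_1}+K_{x_2})$ (norm attainment of $C^*_\varphi$ forces $\|C_\varphi\|^2$ into the point spectrum of $C_\varphi C^*_\varphi$), rewrites it as $C_\varphi K_{\varphi(x_1)}-\|C_\varphi\|^2K_{x_1}=-C_\varphi K_{\varphi(x_2)}+\|C_\varphi\|^2K_{x_2}$, and pairs this vector identity against $K_{x_1}$ and against $K_{x_2}$; in your notation this yields the two scalar equations $D_1=-\overline{\Delta}$ and $D_2=-\Delta$, whose combination (using that $D_1,D_2$ are real) gives $D_1=D_2$ outright. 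You instead use only the single scalar equation $D_1+D_2+2\operatorname{Re}\Delta=0$ coming from $\|C^*_\varphi(K_{x_1}+K_{x_2})\|=\|C_\varphi\|\,\|K_{x_1}+K_{x_2}\|$, and recover the missing information from the Cauchy--Schwarz inequality for the positive semi-definite form $\phi(f,g)=\langle(\|C_\varphi\|^2I-C_\varphi C^*_\varphi)f,g\rangle$ via the chain $D_1+D_2=-2\operatorname{Re}\Delta\leq 2|\Delta|\leq 2\sqrt{D_1D_2}$. Both routes ultimately rest on the positivity of $\|C_\varphi\|^2I-C_\varphi C^*_\varphi$: the paper's version extracts more linear information (it also shows $\Delta$ is real and equals $-D_1$, which your equality case in Cauchy--Schwarz recovers anyway), while yours is self-contained in that it never invokes the equivalence between norm attainment and the eigenvalue equation, needing only the definition of the operator norm and $\|C^*_\varphi\|=\|C_\varphi\|$. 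The final bookkeeping --- $D_i=\|C_\varphi\|^2e^{\|x_i\|^2}-e^{\|\varphi(x_i)\|^2}$ and the use of $\|x_1\|=\|x_2\|=1$ --- is identical in both proofs, and your closing observation that without the normalization one still gets $\|C_\varphi\|^2e^{\|x_1\|^2}-e^{\|\varphi(x_1)\|^2}=\|C_\varphi\|^2e^{\|x_2\|^2}-e^{\|\varphi(x_2)\|^2}$ is exactly the paper's penultimate display.
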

\begin{proof}
Since $C_\varphi$ is norm attaining at $K_{x_1}+K_{x_2}$, we have
\begin{equation}
\begin{split}
C_\varphi C^*_\varphi(K_{x_1}+K_{x_2})&=\left\|C_\varphi\right\|^2(K_{x_1}+K_{x_2}),\,\text{that is},\\
C_\varphi\left(K_{\varphi(x_1)}+K_{\varphi(x_2)}\right)&=\left\|C_\varphi\right\|^2(K_{x_1}+K_{x_2}).
\end{split}
\end{equation}
So,
\begin{equation}\label{Equtn_inner product}
	C_\varphi K_{\varphi(x_1)}-\left\|C_\varphi\right\|^2K_{x_1}=-C_\varphi K_{\varphi(x_2)}+\left\|C_\varphi\right\|^2K_{x_2}.
\end{equation}
By taking inner product both sides of the \cref{Equtn_inner product} with $K_{x_1}$, we get
\begin{equation}\label{real equtn 1}
\begin{split}
\left\langle C_\varphi K_{\varphi(x_1)},K_{x_1}\right\rangle-\left\|C_\varphi\right\|^2\langle K_{x_1},K_{x_1}\rangle&=-\left\langle C_\varphi K_{\varphi(x_2)},K_{x_1} \right\rangle+\left\|C_\varphi\right\|^2\langle K_{x_2},K_{x_1}\rangle\\
\text{or,}\,\left\|K_{\varphi(x_1)} \right\|^2-\left\|C_\varphi\right\|^2\left\|K_{x_1}\right\|^2&=-\left\langle K_{\varphi(x_2)},K_{\varphi(x_1)} \right\rangle+\left\|C_\varphi\right\|^2\langle K_{x_2},K_{x_1}\rangle.
\end{split}
\end{equation}
Similarly, by taking inner product both sides of the \cref{Equtn_inner product} with $K_{x_2}$, we get
\begin{equation}\label{real equtn 2}
-\left\|K_{\varphi(x_2)} \right\|^2+\left\|C_\varphi\right\|^2\left\|K_{x_2}\right\|^2=\left\langle K_{\varphi(x_1)},K_{\varphi(x_2)} \right\rangle-\left\|C_\varphi\right\|^2\langle K_{x_1},K_{x_2}\rangle.
\end{equation}
From Equations \ref{real equtn 1} and \ref{real equtn 2}, we get

\begin{equation}
\begin{split}
\left\|K_{\varphi(x_2)} \right\|^2-\left\|C_\varphi\right\|^2\left\|K_{x_2}\right\|^2=\left\|K_{\varphi(x_1)} \right\|^2-\left\|C_\varphi\right\|^2\left\|K_{x_1}\right\|^2.
\end{split}
\end{equation}
Since $\left\|x_1\right\|=\left\|x_2\right\|$, we have $\left\|K_{x_1}\right\|=\left\|K_{x_2}\right\|$ and consequently, we get the desired conclusion that is, $\|\varphi(x_1)\|=\|\varphi(x_2)\|$.

\end{proof}

\begin{example}
Consider the right shift operator $S$ on $\ell^2(\N)$ defined by $Se_n=e_{n+1}$ for all $n=1,2,\dots$. The vectors $e_j$ denotes the sequence whose $j$-th position is $1$ and the rest are zero. Then $S^*e_1=0$ and the composition operator $C_{\varphi}$ is bounded on $\clh (\ell^2(\N)) $with $\left\|C_{\varphi}\right\|=e^{\frac{1}{2}}$, where $\varphi(z)=Sz+e_1$. Note that $\left\|\varphi(e_3)\right\|=\left\|\varphi(e_4)\right\|$.
Also note that for $i=1,2$ we have
\begin{equation*}
	C_\varphi K_{\varphi(x_i)}=e^{\left\|b\right\|^2+\langle A^*b,x_i\rangle}K_{A^*Ax_i} K_{A^*b}.
\end{equation*}
Now
$$C_\varphi K_{\varphi(e_3)}-\left\|C_\varphi\right\|^2K_{e_3}=e^{\left\|e_1\right\|^2+\langle S^*e_1,e_3\rangle}K_{S^*Se_3} K_{S^*e_1}-eK_{e_3}=0,$$ and 
$$-C_\varphi K_{\varphi(e_4)}+\left\|C_\varphi\right\|^2K_{e_4}=-e^{\left\|e_1\right\|^2+\langle S^*e_1,e_4\rangle}K_{S^*Se_4} K_{S^*e_1}+eK_{e_4}=0.$$ Therefore, by using Equation \ref{Equtn_inner product}, we conclude that $C^*_{\varphi}$ attains norm at $\frac{K_{e_3}+K_{e_4}}{\left\|K_{e_3}+K_{e_4}\right\|}$.

\end{example}

\begin{example}
Let $\D$ denote the open unit disc in the complex plane $\C$. The space $H^2(\D)$ consists of all analytic functions on $\D$ having power series representation with square summable complex coefficients. The set $\{e_n=z^n:n\geq 0\}$ forms an orthonormal basis for $H^2(\D)$. Consider $\E=H^2(\D)$ and the operator $Af=f(0)+z(f-f(0))$ and $b=z$. Then the operator $A$ is isometry and $A^*z=0$. Similarly as in the above example one can show that the composition operator $C^*_\varphi$ on $\clh(H^2(\D))$ with $\varphi(f)=Af+z$ for all $f\in H^2(\D)$ attains norm at $\frac{K_{e_3}+K_{e_4}}{\left\|K_{e_3}+K_{e_4}\right\|}$ along with $\left\|\varphi(e_3)\right\|=\left\|\varphi(e_4)\right\|$.
\end{example}
The following example shows that the converse of the Proposition \ref{linCombExtremal} is not true in general.
\begin{example}\cite[Example 2.6]{Ramesh:Sudip:Venku}
		Let $\mu$ be a real number such  that $0<\mu\leq 1 $.  For $\{x_n\}_{n=1}^{\infty}\in \ell^2(\mathbb{N})$ define the weighted unilateral shift on $\ell^2(\mathbb{N})$ by
		\begin{equation}
			S(x_1,x_2,x_3,\dots)=(0,\mu x_1,x_2,x_3,\dots), \forall \, \{x_n\}\in \ell^2(\mathbb{N}).
		\end{equation}
		The adjoint  $S^*$ of $S$ is given by
		\begin{equation}
			S^*(x_1,x_2,x_3,\dots)=(\mu x_2,x_3,x_4,\dots), \forall \, \{x_n\}\in \ell^2(\mathbb{N}).
		\end{equation}
	
		Let $\hat{b}=(1,\frac{\sqrt{1-\mu^2}}{\mu},0,0,\dots)$.
		Then
		\begin{equation}\label{cyclicc19}
			(I-S^*S)^{\frac{1}{2}}e_1 = S^*\hat{b},
		\end{equation}
		where $e_1 =(1,0,0,\dots)$. Now consider the map $\hat{\psi}:\ell^2(\mathbb{N})\rightarrow\ell^2(\mathbb{N})$ defined by $\hat{\psi}(x)=Sx+\hat{b}$, for all $x\in \ell^2(\mathbb{N})$. Therefore, the corresponding composition operator $C_{\hat{\psi}}$ is bounded on $\mathscr{H}(\ell^2(\mathbb{N}))$.

Note that
$$C_{\hat{\psi}} K_{\hat{\psi}(e_2)}-\left\|C_{\hat{\psi}}\right\|^2K_{e_2}=e^{\left\|\hat{b}\right\|^2+\langle S^*\hat{b},e_2\rangle}K_{S^*Se_2} K_{S^*\hat{b}}-e^{1+\frac{1}{\mu^2}}K_{e_2}=e^{\frac{1}{\mu^2}}K_{e_2}K_{\sqrt{1-\mu^2}e_1}-e^{1+\frac{1}{\mu^2}}K_{e_2},$$ and 
$$C_{\hat{\psi}} K_{\hat{\psi}(e_3)}-\left\|C_{\hat{\psi}}\right\|^2K_{e_3}=e^{\left\|\hat{b}\right\|^2+\langle S^*\hat{b},e_3\rangle}K_{S^*Se_3} K_{S^*\hat{b}}-e^{1+\frac{1}{\mu^2}}K_{e_3}=e^{\frac{1}{\mu^2}}K_{e_3}K_{\sqrt{1-\mu^2}e_1}-e^{1+\frac{1}{\mu^2}}K_{e_3}.$$ Therefore, using Equation \ref{Equtn_inner product}, we conclude that $C^*_{\varphi}$ does not attains norm at $\frac{K_{e_2}+K_{e_3}}{\left\|K_{e_2}+K_{e_3}\right\|}$ even though $\left\|\hat{\psi}(e_2)\right\|=\left\|\hat{\psi}(e_3)\right\|$.

\end{example}

\begin{center}
	\textbf{Acknowledgements}
\end{center}

We are grateful to our PhD supervisors Professor G. Ramesh and Professor D. Venku Naidu, IIT Hyderabad for raising this question. The research of the second named author is supported by the NBHM postdoctoral fellowship, Department of Atomic Energy (DAE), Government of India (File No: 0204/16(21)/2022/R\&D-II/11995).

\end{document}